\documentclass{article}
\usepackage{amsfonts}
\usepackage{amssymb}
\usepackage{amsmath}
\usepackage{amsthm}
\usepackage{mathrsfs}
\usepackage{amstext}
\usepackage{graphicx}
\usepackage{euscript}

\textheight22.5cm
\textwidth15.5cm
\oddsidemargin0.5cm
\evensidemargin0.5cm
\topmargin-1cm
\parindent0cm

\sloppy

\def\thebibliograph#1#2{\section*{{\normalsize \bf #2}}\list
   {[\arabic{enumi}]}{\settowidth\labelwidth{[#1]}\leftmargin\labelwidth
     \advance\leftmargin\labelsep
     \usecounter{enumi}}
     \def\newblock{\hskip .11em plus .33em minus -.07em}
     \sloppy
     \sfcode`\.=1000\relax}

\newcommand{\re}{{\mathbb{R}}}

\newcommand{\N}{{\mathbb{N}}}
\newcommand{\R}{{\mathbb{R}}^n}

\newcommand{\sobolevh}[2]{\dot{W}^{#1}_{#2}}
\newtheorem{thm}{Theorem}
\newtheorem{prop}{Proposition}
\newtheorem{DE}{Definition}

\newcommand{\be}{\begin{equation}}
\newcommand{\ee}{\end{equation}}
\newcommand{\beq}{\begin{eqnarray}}
\newcommand{\eeq}{\end{eqnarray}}
\newcommand{\beqq}{\begin{eqnarray*}}
\newcommand{\eeqq}{\end{eqnarray*}}

\title{ The higher order chain rule in Sobolev spaces }
  \author{G\'erard Bourdaud}
 \date{\today}

\begin{document}
\maketitle

\begin{abstract}   We establish the Fa\`a di Bruno formula, in the sense of  distributions and almost everywhere, for derivatives of the composed function $f\circ g$, for all function $f:\re\rightarrow \re$ such that $f$ acts on $W^m_p(\R)$ by composition, and all
$g\in W^m_p(\R)$.

\end{abstract}

{\it 2000 Mathematics Subject Classification:} 46E35, 47H30.

{\it Keywords:} Composition Operators. Sobolev spaces.


\section{Introduction}


 If  $g: \R\rightarrow \re$ is sufficiently regular and $\alpha :=(\alpha_1,\ldots,\alpha_n) \in \N^n$, we denote by $g^{(\alpha)}$ the partial derivative
\[ \frac{\partial ^{|\alpha|}g}{\partial x_1^{\alpha_1} \cdots \partial x_n^{\alpha_n}}\,,\]
where $|\alpha|:= \alpha_1+\dots + \alpha_n$. In all the paper $\N$ denotes the set of natural numbers, including $0$.
 If moreover  $f:\re\rightarrow\re$ is sufficiently regular, we set :
\[  U_{\gamma,s}(f,g):=(f^{(s)}\circ g)\, g^{(\gamma_{1})} \ldots g^{(\gamma_{s})}\,, \]
for all integer $s>0$, and all 
\begin{equation}\label{gamma} \gamma:= (\gamma_1,\ldots,\gamma_s)\,,\quad \gamma_r\in\N^n\setminus\{0\}\,,  r=1,\dots ,s\,.\end{equation}

Let $\alpha\in \N^n\setminus\{0\}$. Let us recall the classical  Fa\`a di Bruno's formula :
\begin{equation}\label{faa}
(f\circ g)^{(\alpha)}=\sum c_{\alpha,s, \gamma}U_{\gamma,s}(f,g)\,, \end{equation}
 where the sum runs over all parameters $s, \gamma$ s.t. (\ref{gamma}) and 
 \[s=1,\ldots , |\alpha|\,,\quad  \sum_{r=1}^s \gamma_{r}= \alpha\,, \]  
 and the $c_{\alpha,s,\gamma}$'s are some combinatorial constants.
 This formula holds true as soon as the functions $f:\re\rightarrow\re$ and $g: \R\rightarrow \re$ are differentiable up to order $|\alpha|$.  We are asking ourselves if it remains true for functions $g$ in the Sobolev space $W^m_p(\R)$, if the function $f$ satisfies minimal regularity assumptions.\\
 
 In all the paper  $p$ denotes a real number $\geq 1$. We denote by $c$ a positive constant depending only on $m,n,p$ ; its value may change from place to place. If $E,F$ are function spaces, the notation $E\hookrightarrow F$ means that $E\subseteq F$ and that the natural inclusion mapping $E\rightarrow F$ is continuous. All functions are assumed to be real valued. We use the following function spaces :
 \begin{itemize} 
\item ${\mathcal M}$ the space of all the Lebesgue measurable functions on $\R$ ; ${\mathcal M}_0$ the subspace of all functions $f$ s.t. $f(x)=0$ a.e. ; ${\mathbb M}$ the factor space ${\mathcal M}/ {\mathcal M}_0$. Let $q : {\mathcal M} \rightarrow {\mathbb M}$ be the quotient mapping; if $f\in {\mathcal M}$, then $q(f)$ is called the {\em class} of $f$ ;  if $h\in {\mathbb M}$, any $f\in {\mathcal M}$ s.t. $q(f)=h$ is called a {\em representative} of $h$.
 \item  $L_{1, loc}(\R)\subset {\mathbb M}$ the space of all the classes of locally integrable functions on $\R$.

\item $\mathcal{S}'(\R)$ the space of tempered distributions on $\R$. 
\item $C^m(\R)$ the set of all continuously differentiable functions on $\R$, up to order $m$, and $C^\infty(\R)$ the set of all smooth functions on $\R$.
  \item $C^\ell_b(\re)$ the set of functions on $\re$, with bounded continuous derivatives, up to order $\ell\in \N$.
 \item $C_0(\R)$ the set of continuous functions on $\R$ tending to $0$ at infinity.
 \item For $m\in \N$, $W^m_p(\R)$  (resp.  $\dot{W}^m_p(\R)$) the  inhomogeneous (resp. homogeneous) Sobolev space of distributions $f$ s.t. $f^{(\alpha)}\in L_p(\R)$ for all $|\alpha|\leq m$ (resp. $|\alpha|=m$) endowed with its natural norm (resp. seminorm) :
 \[    \|f\|_{W^{m}_{p}}:= \sum_{|\alpha|\leq m}\|f^{(\alpha)}\|_{p} \quad (\mathrm{resp.} \quad
  \|f\|_{\dot{W}^{m}_{p}}:= \sum_{|\alpha|= m} \|f^{(\alpha)}\|_{p} \, )\,.\]
  \end{itemize}

 Let us consider the composition operator $N_f(g):= f\circ g$. Thirty years ago, we have characterized the functions  $f:\re\rightarrow\re$ s.t. 
 $N_f(W^m_p(\R))\subseteq W^m_p(\R)$, see  \cite{B} and \cite[5.2.4]{RS}. For doing that, we dealt with smooth functions.
By using formula (\ref{faa}) for such functions, we proved an estimate of the following type :
 \begin{equation}\label{apriori}  \| f\circ g\|_{W^m_p} \leq C(f)\, \left(1 + \|g\|_{W^m_p}\right)^m\,,\end{equation}
 where $C(f)$ is the norm of $f$ in a certain function space.
 Then a standard approximation argument (see \cite[5.2.1]{RS}) gives the estimate (\ref{apriori}) for all functions $f,g$
 s.t. $C(f)<\infty$ and $g\in W^m_p(\R)$. As far as we know, the chain rule (\ref{faa}) has not  yet been proved for general functions $f,g$.\\

  We consider here the composition operators acting on the homogeneous  Adams-Frazier space
$\dot{A}^m_p(\R):= (\dot{W}^m_p\cap \dot{W}^1_{mp})(\R)$, endowed with its natural seminorm
\[ \|g\|_{\dot{A}^m_p}:= \|g\|_{\dot{W}^m_p} + \|g\|_{ \dot{W}^1_{mp}}\,.\]
The space $\dot{A}^m_p(\R)$ is well known as 
 a good substitute of $W^m_p(\R)$ when one deals with composition operators, see \cite{AF,Bou_09a}. 
 In particular the Dahlberg degeneracy \cite{Da} does not occur in $\dot{A}^m_p(\R)$.\\
 

\section{Composition operators on $\dot{A}^m_p(\R)$}

\subsection{Normalization of a distribution space}\label{norma}
Let $E$ be a subspace of $L_{1,loc}(\R)$. Assume that any element of $E$ admits a (necessarily unique) continuous representative. Then we {\em may} normalize $E$ as follows :\\

{\bf Convention.} Identify $E$ with the set of all the {\em continuous} functions belonging to $E$.\\

This convention is currently applied to the spaces $W^m_p(\R)$ for $m>n/p$, or $m=n$ and $p=1$.

\subsection{Uniform localization}\label{locunif} 

We consider the space $E_{p}$ of distributions  $f$ on $\re$, which belong to $L_p$ {\em locally uniformly}, i.e.
such that
\[\|f\|_{E_p}:= \left(\sup_{a\in {\mathbb R}} \int_a^{a+1} |f(t)|^p {\rm d}t\right)^{1/p}<+\infty\,.\]
For $\ell\in \N$, let us denote by  $W^{\ell}_{E_p}\hookrightarrow L_{1,loc}(\re)$
the Sobolev space based upon  $ E_p$ -- i.e. the set of distributions $f$ s.t. $f^{(j)}\in E_p$ for all $j=0,\ldots , \ell$ -- endowed with its natural norm, in the same way as $W^m_p$.  
 In case $\ell>0$, $W^{\ell}_{E_p}$ is embedded into $W^1_{p,loc}(\re)$, hence any distribution in $W^{\ell}_{E_p}$ admits a locally absolutely continuous representative. In the sequel, we will use the normalization convention, see section \ref{norma}.
 Hence, in case $\ell>0$, $W^{\ell}_{E_p}$ will be a space of locally absolutely continuous functions.
 The same convention was used implicitly in our former papers about $W^{\ell}_{E_p}$ \cite{Bou_09a,BM}. Restricting to continuous functions does not imply a loss of generality when we deal with composition operators on $\dot{A}^m_p(\R)$. Indeed any function 
 $f:\re\rightarrow \re$, s.t. $N_f$ takes $\dot{A}^m_p(\R)$ to itself, is necessarily continuous, see \cite[thm.~4]{Bou_09a}.
  Let us recall the convenient embedding properties for $W^{\ell}_{E_p}$ :
 
 \begin{prop}\label{dense}  For any $\ell\in \N$, $C^\infty(\re)\cap W^{\ell}_{E_p}$ is a dense subspace of $W^{\ell}_{E_p}$. \end{prop}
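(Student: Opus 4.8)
The plan is to build the smooth approximants by convolution with a mollifier and then use the locally uniform integrability to control all derivatives up to order $\ell$ simultaneously. Let $\rho\in C^\infty_c(\re)$ be a standard mollifier, $\rho\geq 0$, $\int\rho=1$, $\supp\rho\subseteq[-1,1]$, and set $\rho_\varepsilon(t):=\varepsilon^{-1}\rho(t/\varepsilon)$. For $f\in W^\ell_{E_p}$ define $f_\varepsilon:=f*\rho_\varepsilon$. Since $f$ and all its distributional derivatives $f^{(j)}$, $j\le\ell$, lie in $E_p\subseteq L_{1,loc}(\re)$, the convolution is well defined, $f_\varepsilon\in C^\infty(\re)$, and $(f_\varepsilon)^{(j)}=f^{(j)}*\rho_\varepsilon$. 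So the whole question reduces to the one-variable (here $n=1$ is implicit, but the argument is dimension-free) statement: if $h\in E_p$ then $h*\rho_\varepsilon\in E_p$ with $\|h*\rho_\varepsilon\|_{E_p}\le\|h\|_{E_p}$, and $h*\rho_\varepsilon\to h$ in $E_p$ as $\varepsilon\to 0$. Applying this to each $h=f^{(j)}$, $j=0,\dots,\ell$, and summing gives $f_\varepsilon\to f$ in $W^\ell_{E_p}$, which is exactly the asserted density.

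First I would prove the uniform bound. For fixed $a$, write $\int_a^{a+1}|h*\rho_\varepsilon(t)|^p\,dt$ and apply Jensen's inequality to the probability measure $\rho_\varepsilon(s)\,ds$ to get $|h*\rho_\varepsilon(t)|^p\le\int\rho_\varepsilon(s)|h(t-s)|^p\,ds$; integrating in $t$ over $[a,a+1]$, using Tonelli and the fact that $\supp\rho_\varepsilon\subseteq[-\varepsilon,\varepsilon]$, the inner integral $\int_a^{a+1}|h(t-s)|^p\,dt=\int_{a-s}^{a+1-s}|h|^p$ is bounded by $2\|h\|_{E_p}^p$ for $\varepsilon\le 1$ (split an interval of length $1$ into at most two unit intervals), whence $\|h*\rho_\varepsilon\|_{E_p}\le 2^{1/p}\|h\|_{E_p}$; a slightly more careful version, covering $[a-s,a+1-s]$ by the single shifted unit window centred appropriately, even gives the clean bound $\|h*\rho_\varepsilon\|_{E_p}\le\|h\|_{E_p}$, but the factor $2^{1/p}$ is harmless here.

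Next, convergence. By the uniform bound just obtained it suffices to prove convergence in the $E_p$-seminorm on a dense subclass and then pass to the limit by a standard $3\varepsilon$ argument; the natural dense subclass is $C_c(\re)$, or even $C_c^\infty(\re)$, which is dense in $L_p(\re)$ and hence, after a routine cut-off argument, dense enough in $E_p$ for this purpose. Actually the cleanest route is: given $h\in E_p$ and $\delta>0$, the quantity $\sup_a\int_a^{a+1}|h(t-s)-h(t)|^p\,dt$ tends to $0$ as $s\to 0$, uniformly for $|s|\le$ something — this is the locally-uniform version of continuity of translation in $L_p$, and it follows by approximating $h$ in $E_p$ by a function that is uniformly continuous with uniform modulus (again via mollification on a slightly larger scale, or by a telescoping/partition-of-unity reduction to the compactly supported $L_p$ case). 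Granting this, Jensen again gives $\|h*\rho_\varepsilon-h\|_{E_p}^p\le\int\rho_\varepsilon(s)\,\sup_a\int_a^{a+1}|h(t-s)-h(t)|^p\,dt\,ds\le\sup_{|s|\le\varepsilon}\sup_a\int_a^{a+1}|h(t-s)-h(t)|^p\,dt\to 0$.

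The main obstacle is precisely the last point: ordinary continuity of translation in $L_p(\re)$ is classical, but we need it \emph{uniformly in the base point $a$}, i.e. in the $E_p$-norm, and a function in $E_p$ need not be globally $L_p$, so one cannot simply quote the $L_p$ fact. I would handle this by a reduction: fix a smooth partition of unity $\{\psi_k\}_{k\in\zz}$ subordinate to the unit-length cover of $\re$ with uniformly bounded derivatives, write $h=\sum_k\psi_k h$ where each $\psi_k h\in L_p$ with $\|\psi_k h\|_p\le c\|h\|_{E_p}$ uniformly in $k$, observe that only boundedly many terms interact with any fixed unit window $[a,a+1]$ after translating by $|s|\le 1$, and thereby transfer the genuine $L_p$ translation-continuity (with a modulus depending only on $\|h\|_{E_p}$ after the dense approximation) to the $E_p$ setting. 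Once this uniform translation-continuity is in hand, everything else is the routine mollification bookkeeping sketched above, and the proposition follows for every $\ell$ by applying it coordinate-wise to $f$ and its derivatives $f^{(j)}$, $|j|\le\ell$.
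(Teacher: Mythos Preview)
Your mollification argument has a genuine gap: the crucial step, namely that
\[
\sup_{a\in\re}\int_a^{a+1}|h(t-s)-h(t)|^p\,\mathrm{d}t\ \longrightarrow\ 0\quad\text{as }s\to 0,
\]
is \emph{false} for general $h\in E_p$, and consequently $h*\rho_\varepsilon$ need not converge to $h$ in $E_p$. A concrete counterexample is $h(t)=\cos(t^2)\in L_\infty\subset E_p$. Writing $(t-s)^2=t^2-2ts+s^2$ and assuming $\rho$ even, one finds (up to an $O(\varepsilon^2)$ error coming from the $s^2$ term)
\[
(h*\rho_\varepsilon)(t)\ \approx\ \widehat{\rho}(2t\varepsilon)\,\cos(t^2),
\qquad\text{hence}\qquad
(h*\rho_\varepsilon-h)(t)\ \approx\ \bigl(\widehat{\rho}(2t\varepsilon)-1\bigr)\cos(t^2).
\]
Fix $R$ with $|\widehat{\rho}(\xi)|\le 1/2$ for $|\xi|\ge R$ and choose $a=R/(2\varepsilon)$. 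For $t\in[a,a+1]$ one has $|\widehat{\rho}(2t\varepsilon)-1|\ge 1/2$, while $\int_a^{a+1}|\cos(t^2)|^p\,\mathrm{d}t\to c_p>0$ as $a\to\infty$. Hence $\|h*\rho_\varepsilon-h\|_{E_p}$ stays bounded away from $0$. The same example lifts to $\ell\ge 1$: if $f(t)=\int_0^t\cos(s^2)\,\mathrm{d}s$ (a bounded Fresnel integral), then $f\in W^1_{E_p}$ but $(f*\rho_\varepsilon)'=h*\rho_\varepsilon\not\to f'$ in $E_p$.

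Both of your proposed remedies fail. Approximating $h$ in $E_p$ by a function with a uniform modulus of continuity ``via mollification on a slightly larger scale'' is circular: that is exactly the convergence in question. In the partition-of-unity reduction, each piece $\psi_k h$ does enjoy $L_p$-continuity of translation, but the \emph{modulus} of that continuity depends on the specific function $\psi_k h$ and not merely on $\|\psi_k h\|_p$; there is no uniformity in $k$, and the example above shows that none can be expected.

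The paper does not prove the proposition directly but refers to \cite[prop.~4]{BM}. A correct argument cannot rely on a single global mollification scale; one workable route is to localise first: write $h=\sum_k\psi_k h$ with a uniformly locally finite partition of unity, approximate each compactly supported piece $\psi_k h$ in $L_p$ by a smooth $\phi_k$ with error $<\delta$ (the scale needed may depend on $k$), and set $\tilde h=\sum_k\phi_k$. Only boundedly many terms meet any unit window, so $\|\tilde h-h\|_{E_p}\le C\delta$; the higher-order case is handled similarly by controlling the derivatives of the localised pieces.
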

\begin{proof} See \cite[prop.~4]{BM}.\end{proof}

 \begin{prop}\label{embed} If $\ell>0$, then $ W^{\ell}_{E_p}$, normalized as above, is embedded into 
$ C^{\ell-1}_b(\re)$\,.\end{prop}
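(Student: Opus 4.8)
The plan is to prove the embedding $W^\ell_{E_p} \hookrightarrow C^{\ell-1}_b(\re)$ by induction on $\ell \geq 1$, reducing at each stage to the base case $\ell = 1$, which asserts that a function $f$ with $f, f' \in E_p$ is bounded and continuous, with $\|f\|_\infty \leq c\,\|f\|_{W^1_{E_p}}$. For the base case I would argue as follows. Since $\ell = 1 > 0$, the normalization convention from section \ref{norma} already tells us $f$ has a (locally absolutely) continuous representative, so only the uniform bound and the uniform continuity of derivatives up to order $0$ need attention. Fix $x \in \re$ and work on the unit interval $I = [x, x+1]$. By the fundamental theorem of calculus, for $s, t \in I$ we have $f(t) = f(s) + \int_s^t f'(u)\,du$. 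Integrating in $s$ over $I$ and applying Hölder's inequality to both the resulting mean value of $f$ and the integral of $f'$ over $I$ (an interval of length $1$, so $\|\cdot\|_{L_1(I)} \leq \|\cdot\|_{L_p(I)}$), I obtain
\[
|f(t)| \leq \int_I |f(s)|\,ds + \int_I |f'(u)|\,du \leq \|f\|_{L_p(I)} + \|f'\|_{L_p(I)} \leq \|f\|_{E_p} + \|f'\|_{E_p} \leq \|f\|_{W^1_{E_p}}.
\]
Taking the supremum over $t \in I$ and then over $x \in \re$ gives $\|f\|_\infty \leq \|f\|_{W^1_{E_p}}$, which settles $\ell = 1$.

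For the inductive step, suppose the result holds for some $\ell \geq 1$ and let $f \in W^{\ell+1}_{E_p}$. Then $f' \in W^\ell_{E_p}$ by definition of the Sobolev scale over $E_p$, so by the inductive hypothesis $f' \in C^{\ell-1}_b(\re)$ with $\|f'\|_{C^{\ell-1}_b} \leq c\,\|f'\|_{W^\ell_{E_p}} \leq c\,\|f\|_{W^{\ell+1}_{E_p}}$. Moreover $f$ itself lies in $W^1_{E_p}$, so by the base case $f \in C^0_b(\re)$ with the corresponding bound. Since $f$ is continuous and its distributional derivative $f'$ is continuous, $f$ is in fact classically $C^1$, and combining the bound on $f$ with the $C^{\ell-1}_b$ bound on $f'$ shows $f \in C^\ell_b(\re)$ with $\|f\|_{C^\ell_b} \leq c\,\|f\|_{W^{\ell+1}_{E_p}}$. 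This closes the induction, and the continuity of the inclusion follows from the displayed norm estimates. The main technical point to get right is the passage from the local $E_p$ bounds to the genuinely uniform sup bound in the base case, i.e. making sure the constant produced by the fundamental theorem of calculus argument is independent of the chosen unit interval; this is exactly what the locally uniform structure of $E_p$ is designed to deliver, so it is a mild obstacle rather than a deep one.
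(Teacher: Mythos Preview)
Your argument is correct: the base case is the standard fundamental-theorem-of-calculus estimate on unit intervals, which is precisely where the \emph{locally uniform} nature of $E_p$ is used, and the inductive step is routine. The paper itself does not give a proof at all; it simply refers to \cite[prop.~5]{BM}, so there is no in-paper argument to compare against. Your self-contained proof is the expected elementary one and would be an acceptable substitute for the citation.
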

 \begin{proof} See \cite[prop.~5]{BM}.\end{proof}
 

\subsection{Main theorem on composition}\label{compth}


Here is the characterization of composition operators in $\dot{A}^m_p(\re^n)$, see \cite[thm.~1]{Bou_09a} and  \cite[prop.~3 and thm.~2]{BM} :

 \begin{thm}\label{comp} Let $m$ be a natural number $\geq 2$, and $f:\re \rightarrow \re$. \begin{itemize} 
  \item If  $f'\in  W^{m-1}_{E_p}$, then the operator $N_f$ sends $\dot{A}^m_p(\re^n)$ to itself and the estimate
 \be\label{esticompo}
 \|f\circ g\|_{\dot{A}^{m}_p} \leq c\,  \|f'\|_{ W^{m-1}_{E_p}} \left(1+ \|g\|_{\dot{A}^{m}_{p}} \right)^m \ee
holds true for all $g\in \dot{A}^m_p(\re^n)$.   
\item Assume moreover that $\dot{A}^m_p(\re^n)$ is not embedded into $L_\infty(\R)$. If $N_f$ sends $\dot{A}^m_p(\re^n)$ to itself, then $f'\in  W^{m-1}_{E_p}$.
\end{itemize}
\end{thm}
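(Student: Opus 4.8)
\textit{Proof idea.} I would prove the two assertions separately, and I expect the real work to be concentrated in the top-order term of the first one.

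\textit{Sufficiency.} First reduce to smooth $f$. The seminorms occurring in $\|f\circ g\|_{\dot A^m_p}$ involve only derivatives of $f\circ g$ of order $\geq 1$, hence — through Fa\`a di Bruno's formula (\ref{faa}) — only the functions $f',(f')',\ldots,(f')^{(m-1)}$; since $C^\infty(\re)\cap W^{m-1}_{E_p}$ is dense in $W^{m-1}_{E_p}$ by Proposition~\ref{dense}, it suffices to establish (\ref{esticompo}) for $f\in C^\infty(\re)$ with $f'\in W^{m-1}_{E_p}$ and then pass to the limit: if $f_k$ are smooth, $f_k(0)=f(0)$ and $f_k'\to f'$ in $W^{m-1}_{E_p}$, one applies (\ref{esticompo}) to the differences $f_k-f_j$ and uses Proposition~\ref{embed} to see that $f_k\circ g$ is Cauchy in $\dot A^m_p(\R)$ with limit $f\circ g$. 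For $f\in C^\infty(\re)$ and $|\alpha|=m$, expand $(f\circ g)^{(\alpha)}$ by (\ref{faa}) and bound each $U_{\gamma,s}(f,g)=(f^{(s)}\circ g)g^{(\gamma_1)}\cdots g^{(\gamma_s)}$ in $L_p(\R)$, and likewise $\partial_j(f\circ g)=(f'\circ g)\,\partial_j g$ in $L_{mp}(\R)$. By Proposition~\ref{embed}, $W^{m-1}_{E_p}\hookrightarrow C^{m-2}_b(\re)$, so $f^{(s)}$ is bounded by $c\|f'\|_{W^{m-1}_{E_p}}$ for $1\leq s\leq m-1$; the terms with $s=1$ are then controlled by $c\|f'\|_{W^{m-1}_{E_p}}\|g\|_{\dot W^m_p}$ (resp. $c\|f'\|_{W^{m-1}_{E_p}}\|g\|_{\dot W^1_{mp}}$), and those with $2\leq s\leq m-1$ by $c\|f'\|_{W^{m-1}_{E_p}}(1+\|g\|_{\dot A^m_p})^m$, using H\"older with exponents $mp/|\gamma_r|$ (which sum to $1/p$ since $\sum_r|\gamma_r|=m$) together with the Gagliardo--Nirenberg inequalities $\|g^{(\gamma)}\|_{mp/|\gamma|}\leq c\,\|g\|_{\dot W^m_p}^{(|\gamma|-1)/(m-1)}\|g\|_{\dot W^1_{mp}}^{(m-|\gamma|)/(m-1)}$ (the total homogeneity in $g$ being $s\leq m$).

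\textit{The main obstacle.} The only term left out is the top one, $s=m$: then $|\gamma_r|=1$ for all $r$, so $U_{\gamma,m}=(f^{(m)}\circ g)\,\partial_{j_1}g\cdots\partial_{j_m}g$ with $f^{(m)}=(f')^{(m-1)}$ belonging merely to $E_p$, and $f^{(m)}\circ g$ need not even be locally integrable. What is needed is an inequality of the form
\[ \bigl\|(h\circ g)\,\partial_{j_1}g\cdots\partial_{j_m}g\bigr\|_{L_p(\R)}\;\leq\;c\,\|h\|_{E_p}\,(1+\|g\|_{\dot A^m_p})^m\qquad(h\in E_p,\ g\in\dot A^m_p(\R)). \]
This is the heart of the matter, and the step I expect to be hardest. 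The plan is to slice the target line into the unit intervals $I_k=[k,k+1]$, write the left-hand side as a sum over $k$ of integrals on the slabs $g^{-1}(I_k)$, and estimate each one by the coarea formula, $\int_{g^{-1}(I_k)}|h(g)|^p|\nabla g|^{mp}=\int_{I_k}|h(t)|^p\,\Phi_g(t)\,dt$ with $\Phi_g(t):=\int_{g^{-1}(t)}|\nabla g|^{mp-1}\,d{\mathcal H}^{n-1}$; since $\int_{I_k}|h|^p\leq\|h\|_{E_p}^p$, it remains to bound $\sum_k\sup_{I_k}\Phi_g$ by $c(1+\|g\|_{\dot A^m_p})^{mp}$, which is precisely where the second-order information $g\in\dot W^m_p(\R)$ enters (it controls the variation of the area function $\Phi_g$). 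This is the content of \cite[thm.~1]{Bou_09a} and \cite[prop.~3 and thm.~2]{BM}. Collecting all the estimates yields (\ref{esticompo}).

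\textit{Necessity.} Assume $N_f$ maps $\dot A^m_p(\R)$ into itself and $\dot A^m_p(\R)\not\hookrightarrow L_\infty(\R)$. By \cite[thm.~4]{Bou_09a}, $f$ is continuous, and a standard superposition-operator argument — concatenating countably many disjointly supported, suitably rescaled copies of a hypothetical unbounded sequence, and using that the $\dot W^m_p$ and $\dot W^1_{mp}$ seminorms are additive on functions with disjoint supports — shows $N_f$ is bounded on bounded subsets of $\dot A^m_p(\R)$. Fix $\varphi\in C^\infty_c(\R)$ equal to $1$ on a cube $Q'$ and set $\psi:=x_1\varphi\in C^\infty_c(\R)\subset\dot A^m_p(\R)$, so $\psi(x)=x_1$ on $Q'$; for $a\in\re$ let $g_a:=a+\psi$, whence $\|g_a\|_{\dot A^m_p}=\|\psi\|_{\dot A^m_p}$ is independent of $a$ and therefore $\|f\circ g_a\|_{\dot A^m_p}\leq M$ for all $a$. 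On $Q'$ we have $g_a(x)=a+x_1$; all derivatives of $\psi$ of order $\geq 2$ vanish there and $\partial_1\psi\equiv 1$, $\partial_j\psi\equiv 0$ ($j\geq 2$), so (\ref{faa}) collapses on $Q'$ to $\partial_1^m(f\circ g_a)(x)=f^{(m)}(a+x_1)$, while $\partial_1(f\circ g_a)(x)=f'(a+x_1)$, both understood in the sense of distributions (this makes $f^{(m)}$ and $f'$ well defined as $L_{1,loc}$ functions near every point). From $\|f\circ g_a\|_{\dot W^m_p}\leq M$ and $\|f\circ g_a\|_{\dot W^1_{mp}}\leq M$, integrating over $Q'$ and letting $a$ range over $\re$, one gets $\sup_b\int_b^{b+1}|f^{(m)}|^p<\infty$ and $\sup_b\int_b^{b+1}|f'|^{mp}<\infty$; thus $f'\in E_p$ and $f^{(m)}=(f')^{(m-1)}\in E_p$, and the intermediate derivatives $(f')^{(j)}$, $1\leq j\leq m-2$, then belong to $E_p$ by a locally uniform Gagliardo--Nirenberg interpolation on unit intervals. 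Hence $f'\in W^{m-1}_{E_p}$. The hypothesis $\dot A^m_p(\R)\not\hookrightarrow L_\infty(\R)$ is exactly what makes this uniform conclusion correct: if the embedding held, bounded subsets of $\dot A^m_p(\R)$ would have uniformly controlled oscillation only, and a purely local condition on $f'$ would already suffice, so the uniform statement would fail. The points requiring most care here are the automatic boundedness of $N_f$ and the above distributional reduction; I refer to \cite{Bou_09a,BM} for the details.
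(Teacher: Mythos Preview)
The paper does not actually prove Theorem~\ref{comp}; it merely cites \cite[thm.~1]{Bou_09a} and \cite[prop.~3, thm.~2]{BM}. So the comparison is with those references, whose key technique the paper reproduces in Section~\ref{casesmooth} while proving the related Theorem~\ref{compU}. Your overall architecture (density reduction to smooth $f$, Fa\`a di Bruno expansion, H\"older plus Gagliardo--Nirenberg for $s\leq m-1$, and the test-function argument with $g_a=a+x_1\varphi$ for necessity) is correct and matches the literature.

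The discrepancy is in the top-order term $s=m$. You propose the coarea formula and reduce to the inequality
\[
\sum_{k\in\zz}\sup_{t\in I_k}\Phi_g(t)\;\leq\;c\,(1+\|g\|_{\dot A^m_p})^{mp},\qquad
\Phi_g(t):=\int_{g^{-1}(t)}|\nabla g|^{mp-1}\,d\mathcal{H}^{n-1},
\]
and then assert that ``this is the content of \cite[thm.~1]{Bou_09a} and \cite[prop.~3 and thm.~2]{BM}''. It is not. Those references, and Section~\ref{casesmooth} of the present paper, use an \emph{integration-by-parts} trick along lines: writing $|f^{(m)}|^p$ as the derivative of a function whose growth is controlled by $\|f'\|_{W^{m-1}_{E_p}}^p$, one trades one factor $Dg$ for a factor $D^2g$ and obtains the identity
\[
\|(f^{(m)}\circ g)(Dg)^m\|_p^p=\sum_{j=1,2}\int_{\R}(F_j\circ g)\,T_j(g)\,dx,
\]
with $F_j$ bounded continuous, $\|F_j\|_\infty\leq c\,\|f'\|_{W^{m-1}_{E_p}}^p$, $T_1(g)=(mp-1)(D^2g)|Dg|^{mp-2}$, $T_2(g)=|Dg|^{mp}$; the right-hand side is then estimated by H\"older and Gagliardo--Nirenberg. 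Your level-set estimate on $\sum_k\sup_{I_k}\Phi_g$ is not established anywhere in those papers, and it is not clear it holds: you only know $\int_{\re}\Phi_g=\|\nabla g\|_{mp}^{mp}$, while passing to suprema over unit intervals requires a pointwise or variation bound on $\Phi_g$ that second-order information on $g$ does not obviously provide. So the ``main obstacle'' is correctly located, but the mechanism you describe is not the one that actually closes the argument; replace the coarea plan by the integration-by-parts identity above (using a distinguished representative $v\in A(\R)$ of $g$, as in Proposition~\ref{GMC}) and the proof goes through.
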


Recall that
$\dot{A}^m_p(\re^n)$ is not embedded into $L_\infty(\R)$, iff
\be\label{crit} m\not= n\quad \mathrm{ or}\quad  p>1\,,\ee see \cite[prop.~3]{BM}.
Proposition \ref{dense} has an useful corollary :

\begin{prop}\label{density}   Let $m$ be a natural number $\geq 1$. Let $f$ be s.t. $f'\in  W^{m-1}_{E_p}$. Then there exists a sequence $(f_k)$ of smooth functions  s.t. 
 \[ \lim_{k\rightarrow \infty} \| f'_k-f'\|_{W^{m-1}_{E_p}}=0\]
 and $f_k \rightarrow f$ uniformly on each bounded subset of $\re$.
 \end{prop}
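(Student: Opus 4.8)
The plan is to produce $f_k$ by integrating a smooth approximation of $f'$ supplied by Proposition \ref{dense}. First I would pin down the representatives. Since $W^{m-1}_{E_p}\hookrightarrow L_{1,loc}(\re)$, the distribution $f'$ is locally integrable, so $f$ admits a locally absolutely continuous representative; I fix that one once and for all, so that $f(x)-f(0)=\int_0^x f'(t)\,dt$ for every $x\in\re$. (If $m\geq 2$, Proposition \ref{embed} shows moreover that $f'$ is continuous and bounded, so $f\in C^1(\re)$ and this is automatic.) Next, apply Proposition \ref{dense} to $f'\in W^{m-1}_{E_p}$: there is a sequence $(h_k)$ in $C^\infty(\re)\cap W^{m-1}_{E_p}$ with $\|h_k-f'\|_{W^{m-1}_{E_p}}\to 0$.

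Then I would set $f_k(x):=f(0)+\int_0^x h_k(t)\,dt$. Since $h_k$ is smooth, $f_k\in C^\infty(\re)$ and $f_k'=h_k$, whence
\[ \|f_k'-f'\|_{W^{m-1}_{E_p}}=\|h_k-f'\|_{W^{m-1}_{E_p}}\longrightarrow 0\,, \]
which is the first required convergence.

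For the uniform convergence on bounded sets, fix $R>0$. For $|x|\leq R$ we have $f_k(x)-f(x)=\int_0^x(h_k(t)-f'(t))\,dt$, hence
\[ |f_k(x)-f(x)|\leq \int_{-R}^{R}|h_k(t)-f'(t)|\,dt\leq (2R+2)\sup_{a\in\re}\int_a^{a+1}|h_k(t)-f'(t)|\,dt\leq (2R+2)\,\|h_k-f'\|_{E_p}\,, \]
where the middle inequality covers $[-R,R]$ by at most $2R+2$ unit intervals $[a,a+1]$, and the last one applies H\"older's inequality on each of them. Since $\|h_k-f'\|_{E_p}\leq\|h_k-f'\|_{W^{m-1}_{E_p}}\to 0$, this gives $f_k\to f$ uniformly on $[-R,R]$, and $R$ was arbitrary.

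I do not expect a genuine obstacle here: the statement amounts to the remark that antidifferentiation is continuous from $W^{m-1}_{E_p}$ into $C(\re)$ with the topology of uniform convergence on compact sets, composed with the density in Proposition \ref{dense}. The only points deserving a word of care are the bookkeeping of the normalization conventions (selecting the locally absolutely continuous representative of $f$ from the outset, so that the fundamental theorem of calculus is available) and the elementary passage from the $L_{1,loc}$ estimate to uniform convergence — and it is precisely there that the \emph{locally uniform} $L_p$ norm defining $E_p$, rather than a global $L_p$ norm, makes the bound come out cleanly.
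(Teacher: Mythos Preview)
Your argument is correct and follows the same skeleton as the paper's: apply Proposition~\ref{dense} to $f'$, integrate the approximants, and normalize at $0$. The one difference lies in how uniform convergence on bounded sets is obtained. The paper invokes Proposition~\ref{embed} to get $\|f_k'-f'\|_\infty\leq c\,\|f_k'-f'\|_{W^{m-1}_{E_p}}$ and then integrates, yielding $|f_k(x)-f(x)|\leq c|x|\varepsilon_k$; this is slightly quicker but tacitly needs $m\geq 2$ (Proposition~\ref{embed} requires $\ell=m-1>0$). Your covering argument bounds $\int_{-R}^{R}|h_k-f'|$ directly by the $E_p$ norm, which is more elementary and covers the stated range $m\geq 1$ in full. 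Since the proposition is only used later for $m\geq 2$, either route suffices for the paper's purposes.
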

 
\begin{proof} According to Proposition \ref{dense}, there exists a sequence $(u_k)$ of smooth functions s.t. 
 \[\varepsilon_k:= \| u'_k-f'\|_{W^{m-1}_{E_p}}\] tends to $0$. By Proposition \ref{embed}, we have
$ \| u'_k-f'\|_\infty \leq c\varepsilon_k$. If we define $f_k(x):= u_k(x)-u_k(0)+ f(0)$, we obtain
$| f_k(x)-f(x)| \leq c|x|\varepsilon_k$, for all $x\in \re$.\end{proof}


\section{Main result}

First of all, we need an unambiguous definition of
$U_{\gamma,s}(f,g)\in {\mathbb M}$ if $f'\in  W^{m-1}_{E_p}$ and $g\in \dot{A}^m_p(\R)$.

 Let $u:\re\rightarrow\re$ be any measurable function. If $v, w_r\in {\mathcal M}$ are respective representatives of $g$ and $g^{(\gamma_r)}$, $r=1,\ldots s$, we define 
\[\mathcal{U}_{\gamma,s}(u,v,w_1,\ldots,w_s)(x): = u(v(x))\,w_1(x)\cdots w_s(x)\,. \]
Clearly, the class of  $\mathcal{U}_{\gamma,s}(u,v,w_1,\ldots,w_s)$ does not depend on the choice of the specific representatives $v,w_r$, $r=1,\ldots s$ ; it depends only on $u$ and $g$ ; we denote it by $\mathbb{U}_{\gamma,s}(u,g)$.

 In case $s<m$, by the normalization convention and by Proposition \ref{embed}, the function $f^{(s)}$ is continuous. There is no need to choose another representative than $f^{(s)}$ itself. 
 Then we set  $U_{\gamma,s}(f,g):= \mathbb{U}_{\gamma,s}(f^{(s)},g)$. That works as well for $s=m$ if $f\in C^m(\R)$.
 
 \begin{DE}\label{welldef} If
$\mathbb{U}_{\gamma,m}(u_1,g) =\mathbb{U}_{\gamma,m}(u_2,g)$ for any representatives $u_1,u_2$ of $f^{(m)}$,
we say that {\em  $U_{\gamma,m}(f,g)$ is well defined}, and we set
$U_{\gamma,m}(f,g):= \mathbb{U}_{\gamma,m}(u,g)$  for any representative $u$ of $ f^{(m)}$.
\end{DE}


  Our main theorem is the following :

 \begin{thm}\label{compU} Let $m$ be a natural number $\geq 2$. Let $f:\re \rightarrow \re$ be a function s.t.  $f'\in  W^{m-1}_{E_p}
 $. 
 Then, for all function $g\in \dot{A}^m_p(\R)$,   \begin{itemize}
 \item
$U_{\gamma,s}(f,g)$ is well defined  and satisfies
 \begin{equation}\label{esticomp} \|U_{\gamma,s}(f,g)\|_{pm/\ell}
 \leq c\,  \|f'\|_{ W^{m-1}_{E_p}} \left(1+ \|g\|_{\dot{A}^{m}_{p}} \right)^m ,\end{equation}
 for all $s=1,\ldots ,m$, $\gamma$ s.t. (\ref{gamma}) and \be\label{goodh} \ell:= \sum_{r=1}^s |\gamma_r| \leq m\,.\ee
  
  \item The higher order chain rule (\ref{faa}) holds true in the sense of distributions, and almost everywhere, for all $\alpha\in \N^n$ s.t.  $0<|\alpha| \leq m$.
  \end{itemize}
 \end{thm}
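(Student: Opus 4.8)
The plan is to prove both assertions simultaneously via an approximation argument, using the smooth functions $f_k$ produced by Proposition \ref{density} together with the already-established composition estimate (\ref{esticompo}). The rough scheme is: (1) establish the estimate (\ref{esticomp}) first for smooth $f$, by a direct interpolation/H\"older argument; (2) pass to the limit $f_k\to f$ to get (\ref{esticomp}) in general and to show $U_{\gamma,m}(f,g)$ is well defined; (3) write Fa\`a di Bruno (\ref{faa}) for each $f_k$ and $g$ (where it holds classically since $f_k\in C^\infty$ and $g$ is smooth enough after a further approximation, or directly since $g\in\dot A^m_p$ has enough regularity by Proposition \ref{embed} when $m>n/p$ — but in general we also need to approximate $g$), and pass to the limit in $\mathcal{D}'(\R)$.

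For step (1), fix a smooth $f$, a representative data $(v,w_1,\dots,w_s)$ for $g,g^{(\gamma_1)},\dots,g^{(\gamma_s)}$, and estimate $\|(f^{(s)}\circ g)\,g^{(\gamma_1)}\cdots g^{(\gamma_s)}\|_{pm/\ell}$. By H\"older's inequality with exponents $pm/(s|\gamma_r|)$ — noting $\sum_r |\gamma_r|=\ell\le m$ and $s\le\ell$ — one reduces to bounding $\|f^{(s)}\circ g\|_{q_0}$ and each $\|g^{(\gamma_r)}\|_{pm/|\gamma_r|}$ for suitable exponents. The derivative factors are controlled by the Gagliardo--Nirenberg-type embedding $\dot W^m_p\cap\dot W^1_{mp}\hookrightarrow\dot W^j_{mp/j}$ for $1\le j\le m$, which is the standard interpolation inequality underlying the Adams--Frazier space and gives $\|g^{(\gamma_r)}\|_{mp/|\gamma_r|}\le c\|g\|_{\dot A^m_p}$; the factor $f^{(s)}\circ g$ is controlled via the uniform-localization norm of $f^{(s)}$ (which is finite since $f'\in W^{m-1}_{E_p}$ so $f^{(s)}\in E_p$) combined with control on the measure of level sets of $g$ — precisely the kind of bookkeeping that leads to the $(1+\|g\|_{\dot A^m_p})^m$ factor, exactly as in the proof of (\ref{esticompo}). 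Collecting the exponents, $\sum_r |\gamma_r|/(mp) + 1/q_0 = \ell/(mp)$ should close with $q_0$ large enough that $E_p$-control of $f^{(s)}$ suffices; I would lift these estimates essentially verbatim from \cite{Bou_09a,BM} rather than redo them.

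For step (2), given the sequence $f_k$ with $\|f_k'-f'\|_{W^{m-1}_{E_p}}\to 0$ and $f_k\to f$ locally uniformly, the differences $f_k-f_j$ satisfy the smooth-case estimate (\ref{esticomp}), hence $\mathcal{U}_{\gamma,s}(f_k^{(s)},v,w_1,\dots,w_s)$ is Cauchy in $L_{pm/\ell}$ for $s<m$, and its limit is $\mathcal{U}_{\gamma,s}(f^{(s)},v,\dots)$ because $f_k^{(s)}\to f^{(s)}$ uniformly on bounded sets (by Proposition \ref{embed}) and $v$ is finite a.e., so $f_k^{(s)}(v(x))\to f^{(s)}(v(x))$ a.e. For $s=m$: the same Cauchy property holds, and if $u_1,u_2$ are two representatives of $f^{(m)}$, then using that both $u_k:=f_k^{(m)}$ converge to $f^{(m)}$ in $E_p$ hence in $L_{1,\mathrm{loc}}$ — so along a subsequence a.e. — one shows $\mathbb U_{\gamma,m}(u_1,g)=\mathbb U_{\gamma,m}(u_2,g)$ as elements of $\mathbb M$, i.e. $U_{\gamma,m}(f,g)$ is well defined and equals the $L_{pm/m}=L_p$-limit of $U_{\gamma,m}(f_k,g)$. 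This simultaneously yields (\ref{esticomp}) for $f$ by passing to the limit.

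For step (3), I would first reduce to smooth $g$: pick $g_i\to g$ in $\dot A^m_p$ (density of smooth functions, with appropriate normalization), write (\ref{faa}) classically for $f_k$ and $g_i$, then let $i\to\infty$ using (\ref{esticompo}) applied to $f_k$ and the estimate (\ref{esticomp}) for each term — both sides converge in $L_p$ (the left side is $(f_k\circ g_i)^{(\alpha)}$, controlled by (\ref{esticompo}); each right-side term $U_{\gamma,s}(f_k,g_i)$ is controlled by (\ref{esticomp}), and the continuity in $g$ follows from the multilinear structure plus the uniform local boundedness of $f_k^{(s)}$, $f_k^{(s)}{}'$). This gives (\ref{faa}) for $f_k$ and $g$. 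Finally let $k\to\infty$: the left side $(f_k\circ g)^{(\alpha)}\to (f\circ g)^{(\alpha)}$ in $\mathcal{D}'(\R)$ because $f_k\circ g\to f\circ g$ in $L_{1,\mathrm{loc}}$ (by (\ref{esticompo}) for $f_k-f$, or simply by dominated convergence using local uniform convergence), and the right side converges in $L_p$ by step (2). Hence (\ref{faa}) holds in $\mathcal{D}'(\R)$, and since every term lies in some $L_{pm/\ell}\subseteq L_{1,\mathrm{loc}}$, the equality also holds almost everywhere.

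The main obstacle I anticipate is step (1): getting the H\"older exponents, the Gagliardo--Nirenberg interpolation, and the level-set/uniform-localization estimate for $f^{(s)}\circ g$ to fit together cleanly to produce precisely the norm $\|U_{\gamma,s}(f,g)\|_{pm/\ell}$ with the right power $(1+\|g\|_{\dot A^m_p})^m$ — in particular handling the borderline exponent when $\ell=m$ and $s=m$ (so the target exponent is $p$ and one factor is $f^{(m)}\circ g$ with $f^{(m)}$ only in $E_p$, no continuity), which is exactly where the well-definedness subtlety of Definition \ref{welldef} bites. The secondary difficulty is ensuring all the limiting arguments are compatible with the normalization conventions of section \ref{norma} and that the double approximation (in $f$ and in $g$) is carried out in the right order so that no term is ever evaluated outside its domain of control.
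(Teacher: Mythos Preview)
Your overall architecture --- prove (\ref{esticomp}) for smooth $f$, pass to the limit $f_k\to f$, then establish (\ref{faa}) by a double approximation first in $g$ and then in $f$ --- is exactly the paper's strategy. Two points deserve comment.

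First, for $s<m$ you work harder than necessary. By Proposition~\ref{embed}, $f'\in W^{m-1}_{E_p}$ forces $f^{(s)}\in C^0_b(\re)$ with $\|f^{(s)}\|_\infty\le c\,\|f'\|_{W^{m-1}_{E_p}}$ for every $s\le m-1$. Hence $f^{(s)}\circ g$ is simply bounded, and no level-set or uniform-localization bookkeeping is needed: H\"older plus the Gagliardo--Nirenberg interpolation (\ref{GN})--(\ref{interpol2}) gives (\ref{esticomp}) directly. The only delicate case is $s=m$, where $f^{(m)}$ is merely in $E_p$. There the paper (following \cite[Lemma~1]{BM}) does \emph{not} use a level-set argument but an integration-by-parts identity that rewrites $\|(f^{(m)}\circ g)(Dg)^m\|_p^p$ as a sum $\sum_j\int (F_j\circ g)\,T_j(g)\,\mathrm{d}x$ with $F_j$ \emph{bounded continuous} and $T_j$ built from $Dg,D^2g$. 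Your description of this step is off the mark, though since you explicitly defer to \cite{Bou_09a,BM} the gap is more expository than mathematical.

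Second --- and this is a genuine gap --- your identification of the $L_p$-limit with $\mathbb U_{\gamma,m}(u,g)$ for an \emph{arbitrary} representative $u$ of $f^{(m)}$ is incomplete. You argue that $f_k^{(m)}\to f^{(m)}$ in $L_{1,\mathrm{loc}}(\re)$, hence a.e.\ along a subsequence; this gives a null set $M\subset\re$ off which $f_k^{(m)}(t)\to u(t)$. But $g$ may map a set of \emph{positive} measure in $\R$ into $M$, so $f_k^{(m)}(g(x))$ need not converge to $u(g(x))$ a.e.\ in $\R$, and the well-definedness claim does not follow. The paper closes this by choosing a distinguished representative $v$ of $g$ (locally absolutely continuous on almost every line parallel to the axes, via Proposition~\ref{GMC}) and invoking de~la~Vall\'ee-Poussin's lemma: for a.e.\ $x'\in\re^{n-1}$, the set $\{t:\ v(t,x')\in M\ \text{and}\ \partial_1 v(t,x')\neq 0\}$ has one-dimensional measure zero. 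Since $U_{\gamma,m}$ carries the factor $\partial_{j_1}v$, the product $(f_k^{(m)}\circ v)\,\partial_{j_1}v$ \emph{does} converge a.e.\ to $(u\circ v)\,\partial_{j_1}v$, the bad set $v^{-1}(M)$ being neutralized by the vanishing gradient factor. This is precisely the mechanism behind Definition~\ref{welldef}, and it is absent from your step~(2).
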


 \subsection{Proof of the estimate (\ref{esticomp}) in case $s<m$}\label{s<m}
 
 This part of the proof is classical. For the sake of completeness, we outline the argument.

 We exploit the Gagliardo-Nirenberg embedding :
  \be\label{GN} 
  \dot{A}^m_p(\re^n) \hookrightarrow   \dot{W}^\ell_{pm/\ell}(\re^n)\,,\quad \ell=1,\ldots , m\,,
  \ee
 together with the estimate
 \be\label{interpol2}
 \|g\|_{\dot{W}^{\ell}_{pm/\ell}} \le c\,\|g\|_{\dot{W}^{1}_{pm}}^{\frac{m-\ell}{m-1}}\,
\|g\|_{\dot{W}^{m}_{p}}^{\frac{\ell-1}{m-1}}\,.\ee
 According to Proposition \ref{embed}, $f^{(s)}\in C^0_b(\re)$ and
 $\| f^{(s)} \|_\infty \leq c \, \|f'\|_{ W^{m-1}_{E_p}}$. Then $f^{(s)}\circ g$ is a bounded measurable function s.t.
 $\|f^{(s)}\circ g\|_\infty\leq c \|f'\|_{ W^{m-1}_{E_p}}$.
 By H\"older inequality, and by (\ref{goodh}), we obtain
\[ \|g^{(\gamma_{1})}
\cdots
 g^{(\gamma_{s})}\|_{pm/\ell}\le
 \prod_{r=1}^{s}\|g\|_{\sobolevh{|\gamma_{r}|}{pm /|\gamma_{r}|}}\,. \]
 By (\ref{interpol2}), 
the right hand side of the above inequality is estimated by
\[
\left( \prod_{r=1}^{s}\|g\|_{\sobolevh{1}{pm}}^{\frac{m-|\gamma_{r}|}{m-1}}\right)
 \left( \prod_{r=1}^{s}\|g\|_{\sobolevh{m}{p}}^{\frac{|\gamma_{r}|-1}{m-1}}\right)\,.
\]
 By summing up the exponents, we obtain
\[\| g^{(\gamma_{1})}
\cdots
 g^{(\gamma_{s})}\|_{pm/\ell}
\le c\,\|g\|_{\sobolevh{1}{pm}}^{\frac{sm-\ell}{m-1}}
\|g\|_{\sobolevh{m}{p}}^{\frac{\ell-s}{m-1}} \leq c\, \|g\|^s_{\dot{A}^m_p}
\,.\]
That implies the estimate (\ref{esticomp}).

\subsection{Proof of the estimate (\ref{esticomp}), and the well-definiteness of $U_{\gamma,m}$}\label{s=m}
In that case, there exist  integers
$j_r\in \{1,\ldots,n\}$, $r=1,\ldots , m$, s.t. 
 $U_{\gamma,m}(f,g) = (f^{(m)}\circ g)\, \partial_{j_1}g\cdots \partial_{j_m}g$ for smooth functions $f,g$.
 
 \subsubsection{Distinguished representatives} Let us define
  $A(\R)$ as the set of measurable functions on $\R$, which are locally absolutely continuous on almost every line parallel to any coordinate axis.
     The pertinency of the class $A(\R)$ in the framework of Sobolev spaces relies upon the following classical result, due to Gagliardo, Morrey and Calkin (see \cite{G}, \cite[Lem.~1.5]{MM}, \cite[1.1.3, thm.~1]{MA}): 
     
  \begin{prop}\label{GMC} All $g\in W^1_p(\R)$, admit a representative $v\in A(\R)$ s.t. the usual gradient of $v$ is a representative of the distributional gradient of $g$.      \end{prop}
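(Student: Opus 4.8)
The plan is to build the representative by mollification, using the elementary fact that in one dimension a $W^1_p$ function is absolutely continuous, and then to transport this to almost every coordinate line by a Fubini argument. Since the coordinate axes play symmetric roles, the core of the matter is to handle a single fixed direction; the genuine difficulty, addressed last, is to arrange that \emph{one} representative serves all $n$ directions at once. First I fix a standard mollifier and set $g_k:=g\ast\rho_{1/k}$, so that $g_k\in C^\infty(\R)$ and $g_k\to g$ in $W^1_p(\R)$, with $\partial_j g_k\to\partial_j g$ in $L_p$ for every $j$. Passing to a subsequence I may assume $\sum_k\|g_{k+1}-g_k\|_{W^1_p}<\infty$; write $d_k:=g_{k+1}-g_k$. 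This single subsequence will feed every direction and every scale simultaneously, which is what makes the final gluing possible.

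Next I fix a direction, say $e_n$, and write $x=(x',t)$ with $x'\in\re^{n-1}$, $t\in\re$. For each $R>0$ I set
\[ \psi_k^R(x'):=\left(\int_{-R}^R\bigl(|d_k(x',t)|^p+|\partial_n d_k(x',t)|^p\bigr)\,{\rm d}t\right)^{1/p}. \]
By Fubini, $\|\psi_k^R\|_{L_p((-R,R)^{n-1})}\le\|d_k\|_{W^1_p}$, so $\sum_k\psi_k^R$ converges in $L_p((-R,R)^{n-1})$ and is finite for almost every $x'$. For such $x'$ the series $\sum_k\|d_k(x',\cdot)\|_{W^1_p((-R,R))}$ converges, hence $g_k(x',\cdot)$ is Cauchy, so convergent, in $W^1_p((-R,R))$. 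Because the one-dimensional embedding $W^1_p((-R,R))\hookrightarrow C([-R,R])$ holds for every $p\ge1$, this convergence is uniform on $[-R,R]$ and the limit is absolutely continuous there, with derivative the $L_p$-limit of $\partial_n g_k(x',\cdot)$.

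The main obstacle is to package these direction-by-direction limits into one function. I simply define $v(x):=\lim_k g_k(x)$ at every point where the limit exists, and $0$ otherwise. Exhausting $\re^{n-1}$ by the cubes $R=1,2,\ldots$ and intersecting the corresponding full-measure sets of $x'$, I obtain for almost every line $\ell$ parallel to $e_n$ that $g_k$ converges uniformly on every compact subset of $\ell$; thus the pointwise limit exists at \emph{every} point of $\ell$, so $v$ is defined on all of $\ell$ and equals there the locally absolutely continuous one-dimensional limit. The decisive point is that the \emph{same} subsequence and the \emph{same} pointwise limit $v$ work verbatim when the direction $e_n$ is replaced by any $e_j$, so that $v\in A(\R)$. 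Moreover $\sum_k\|d_k\|_p<\infty$ forces $\sum_k d_k$ to converge absolutely a.e., whence $g_k\to g$ a.e.\ and $v=g$ a.e., so $v$ represents $g$.

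It remains to identify the classical gradient of $v$ with a representative of the distributional gradient. For each $j$ and a.e.\ line parallel to $e_j$ the above gives $\partial_j g_k\to\partial_j v$ in $L_p$ along that line, while globally $\partial_j g_k\to\partial_j g$ in $L_p(\R)$; by Fubini the line restrictions of $\partial_j g_k$ converge (along a subsequence) to those of $\partial_j g$ for a.e.\ $x'$, so the classical derivative $\partial_j v$ coincides a.e.\ with $\partial_j g$. In particular each $\partial_j v\in L_p$, and the usual gradient of $v\in A(\R)$ represents the distributional gradient of $g$, as claimed.
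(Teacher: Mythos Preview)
Your argument is correct and is essentially the classical mollification-plus-Fubini proof of this characterization. Note, however, that the paper does not supply its own proof here: Proposition~\ref{GMC} is stated as a known result of Gagliardo, Morrey and Calkin, with pointers to \cite{G}, \cite[Lem.~1.5]{MM} and \cite[1.1.3, thm.~1]{MA}. So there is no in-paper argument to compare against; you have simply written out one of the standard proofs these references contain.

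One small cosmetic point: in your last paragraph the parenthetical ``(along a subsequence)'' is unnecessary and slightly misleading. You already arranged $\sum_k\|d_k\|_{W^1_p}<\infty$, hence $\sum_k|\partial_j d_k|\in L_p(\R)$ and therefore $\partial_j g_k\to\partial_j g$ a.e.\ in $\R$ along the very sequence you fixed; combining this with the line-wise $L_{p,loc}$ convergence $\partial_j g_k|_\ell\to\partial_j v|_\ell$ gives $\partial_j v=\partial_j g$ a.e.\ directly, without extracting anything further.
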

  
  In the remaining of Section \ref{s=m}, we consider a fixed $g\in \dot{A}^m_p(\R)$. Since $g\in \dot{A}^1_{mp}(\R)$, then $g$ belongs locally to  $W^1_{mp}(\R)$. We fix a representative $v$ of $g$ satisfying the conditions of
Proposition \ref{GMC}, that we call
a {\em distinguished representative}.
       
 \subsubsection{The case of a smooth function $f$}\label{casesmooth}

We follow the proof of \cite[Lemma 1]{BM}, with 
  the same notation. Without loss of generality, we replace $U_{\gamma,m}(f,g)$ by 
\[ S_{D}(g) :=(f^{(m)}\circ g)\, (Dg)^m\,,\]
where $D$ is any first order differential operator with constant coefficients. 
Then 
  \be\label{goodIP}
  \|S_{D}(g)\|_p^p= \sum_{j=1,2} \int_{\R} (F_j\circ g)  \, T_j(g)\,{\rm d}x\,,
  \ee
  where $F_1,F_2$ are bounded continuous functions  on $\re$ s.t. $\|F_j\|_\infty \leq c\,  \|f'\|^p_{ W^{m-1}_{E_p}}$, and
 \[T_1(g):=  (mp-1) \,(D^2g)\, |Dg|^{mp-2}\,,\quad
T_2(g):=  |Dg|^{mp} \,.\]
Notice that the proof of (\ref{goodIP}) makes use of the distinguished representative of $g$.
Then $\|T_2(g)\|_1 \leq c\, \| g\|^{mp}_{\sobolevh{1}{mp}}$
 and, by H\"older and Gagliardo-Nirenberg,
  \[\|T_1(g)\|_1 \leq c \,  \|g\|_{\sobolevh{1}{pm}}^{mp-2}\,
\|g\|_{\sobolevh{2}{pm/2}} \leq c_1\, \|g\|_{\sobolevh{1}{pm}}^{mp - \frac{m}{m-1}}\,
\|g\|_{\sobolevh{m}{p}}^{\frac{1}{m-1}}\leq c_1 \|g\|^{mp-1}_{\dot{A}^{m}_{p}}\,.\]
We obtain 
\[ \|S_D(g)\|_p\leq c\,  \|f'\|_{ W^{m-1}_{E_p}}\,\left( \|g\|^{m-(1/p)}_{\dot{A}^{m}_{p}} +  \|g\|^{m}_{\dot{A}^{m}_{p}}\right)\,.\]
That implies the estimate (\ref{esticomp}).\\

\subsubsection{The general case} By Section \ref{casesmooth},
the operator $f\mapsto U_{\gamma,m}(f,g)$ can be uniquely extended by density to the whole set of functions $f$ s.t. $f'\in W^{m-1}_{E_p}$. Let us define $V_{\gamma,m}(f,g)\in L_p(\R)$ as the limit of the sequence
$( U_{\gamma,m}(f_k,g))$, for any
sequence $(f_k)$ of smooth functions approximating $f$ in the sense of Proposition \ref{density}. It holds
\begin{equation}\label{opV}\| V_{\gamma,m}(f,g)\|_p \leq c\,  \| f'\|_{W^{m-1}_{E_p}} (1 + \|g\|_{\dot{A}^{m}_{p}})^m\,.\end{equation}

 Let us consider any representative $u$ of $f^{(m)}$. Let $h$ be a representative of $V_{\gamma,m}(f,g)$.
 We claim that
\begin{equation}\label{concrete} h(x) =  \mathcal{U}_{\gamma,m}(u,v,\partial_{j_1}v,\ldots,\partial_{j_m}v)(x)\quad \mathrm{ a.e.}\,.\end{equation}
For convenience, we assume that $j_1=1$. By assumption on $v$, there exists a set $N$ of measure $0$ in $\re^{n-1}$ s.t., for every $x'\notin N$, the function $t\mapsto v(t,x')$ is locally absolutely continuous on $\re$. Let $(f_k)$ be a sequence of smooth functions  approximating $f$ in the sense of Proposition \ref{density}. By taking a subsequence, if necessary, we have
\[f_k^{(m)}(t) \rightarrow u(t) \quad\mathrm{for\,almost\,every}\, t\in \re\,,\]
\begin{equation}\label{convae2}   \mathcal{U}_{\gamma,m}(f_k^{(m)},v,\partial_{j_1}v,\ldots,\partial_{j_m}v)(x)\rightarrow  h(x) \quad\mathrm{for\,almost\,every}\, x\in \R\,.\end{equation}

Let $M$ be the set of $t\in \re$ s.t. $f_k^{(m)}(t)$ does not converges to $u(t)$. Then $M$ is a set of measure $0$. Applying de la Vall\'ee-Poussin's lemma (\cite{VP} and \cite[lem.~1.1]{MM}), we deduce that the set
\[B_{x'}:= \{\, t\in \re\,|\, v(t,x')\in M \quad \mathrm{and}\quad \partial_1v(t,x')\not=0\,\}\]
is  of measure $0$, for any $x'\notin N$. Let us define :
\[C:= \{(t,x')\,|\, t\notin B_{x'} \, \mathrm{and}\, x'\notin N\}\,.\]
Then $\R\setminus C$ is a set of measure $0$ and $f_k^{(m)}(v(t,x')))\,\partial_1v(t,x') \rightarrow u(v(t,x'))\, \partial_1v(t,x')$ for all $(t,x')\in C$. By using the property (\ref{convae2}), we  may conclude that the property (\ref{concrete}) is satisfied. We conclude that
\[ V_{\gamma,m}(f,g)= {\mathbb U}_{\gamma,m}(u,g)\,.\]
It follows that 
${\mathbb U}_{\gamma,m}(u,g)$ does not depend on the specific representative  $u$. 
 Hence $U_{\gamma,m}(f,g)$ is well defined, and $U_{\gamma,m}(f,g)=  V_{\gamma,m}(f,g)$. The estimate (\ref{esticomp}) follows by (\ref{opV}).

\subsection{Proof of the chain rule}\label{proofchain}

\subsubsection{A preparation}\label{real}

The function space $ \dot{A}^m_p(\re^n)$ is endowed with a seminorm s.t. $\|g\| =0$ iff $g$ is a constant function.
We have the following statement, of immediate proof :
  \begin{prop}\label{constant} Let  $g$ be  a function in $\dot{A}^m_p(\re^n)$ satisfying the following property : the chain rule (\ref{faa}) holds true for all $f'\in  W^{m-1}_{E_p}$ and all $0<|\alpha|\leq m$. Then, for all number $a\in \re$, the function $g+a$ satisfies the same property.
  \end{prop}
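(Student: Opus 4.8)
The plan is to reduce the claim to a direct comparison of the two Fa\`a di Bruno expansions, one written for $g$ and one for $g+a$. First I would note that for a smooth function $f$ and any smooth $g$ the classical formula (\ref{faa}) is an identity among the iterated derivatives, so in particular, applying it with the smooth translate $\tilde f(y):= f(y+a)$ in place of $f$ and $g$ in place of $g+a$, one gets
\[
(f\circ(g+a))^{(\alpha)} = (\tilde f\circ g)^{(\alpha)} = \sum c_{\alpha,s,\gamma}\, U_{\gamma,s}(\tilde f, g)\,.
\]
Since $\tilde f^{(s)}(y) = f^{(s)}(y+a)$ for every $s$, the factor $f^{(s)}\circ(g+a) = \tilde f^{(s)}\circ g$ is the same measurable function whether one views it as built from $f$ and $g+a$ or from $\tilde f$ and $g$; and the derivative factors $g^{(\gamma_r)} = (g+a)^{(\gamma_r)}$ are literally unchanged because $a$ is a constant and $|\gamma_r|\geq 1$. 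Hence $U_{\gamma,s}(f,g+a) = U_{\gamma,s}(\tilde f, g)$ as elements of ${\mathbb M}$, for every admissible $s,\gamma$.

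The second step is to check that the translation $f\mapsto \tilde f$ preserves the regularity hypothesis. This is immediate: $\tilde f' (y) = f'(y+a)$, and the space $W^{m-1}_{E_p}$ is translation invariant with $\|\tilde f'\|_{W^{m-1}_{E_p}} = \|f'\|_{W^{m-1}_{E_p}}$, since the defining supremum $\sup_{b}\int_b^{b+1}|\cdot|^p$ is unaffected by a shift of the variable and differentiation commutes with translation. So $f'\in W^{m-1}_{E_p}$ if and only if $\tilde f'\in W^{m-1}_{E_p}$. Likewise, by Definition \ref{welldef}, $U_{\gamma,m}(f,g+a)$ is well defined precisely when $U_{\gamma,m}(\tilde f,g)$ is, because $u$ is a representative of $f^{(m)}$ iff $y\mapsto u(y+a)$ is a representative of $\tilde f^{(m)}$, and $\mathbb{U}_{\gamma,m}$ transforms accordingly.

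Putting these together: by hypothesis on $g$, the chain rule (\ref{faa}) holds for the pair $(\tilde f, g)$ for all $0<|\alpha|\leq m$ (as distributions and a.e.), i.e.
\[
(\tilde f\circ g)^{(\alpha)} = \sum c_{\alpha,s,\gamma}\, U_{\gamma,s}(\tilde f,g)\,.
\]
Rewriting the left side as $(f\circ(g+a))^{(\alpha)}$ — which is legitimate since $\tilde f\circ g = (f\circ(g+a))$ as distributions, being equal as locally integrable functions — and the right side termwise via $U_{\gamma,s}(\tilde f,g) = U_{\gamma,s}(f,g+a)$, yields exactly (\ref{faa}) for the pair $(f,g+a)$. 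This holds for every $a\in\re$ and every $f$ with $f'\in W^{m-1}_{E_p}$, which is the assertion. There is no serious obstacle here; the only point requiring a moment's care is the bookkeeping that every object attached to $(f,g+a)$ corresponds, under the translation $f\mapsto\tilde f$, to the object attached to $(\tilde f,g)$ — in particular that "well defined" in the sense of Definition \ref{welldef} is preserved — and this is what makes the proposition "of immediate proof" as announced.
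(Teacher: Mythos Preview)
Your proposal is correct and is precisely the natural argument the paper has in mind when it calls the proposition ``of immediate proof'': the translation $f\mapsto \tilde f(\cdot)=f(\cdot+a)$ is an isometry of $W^{m-1}_{E_p}$, it intertwines $U_{\gamma,s}(f,g+a)$ with $U_{\gamma,s}(\tilde f,g)$ (including the well-definedness issue at $s=m$), and it identifies $f\circ(g+a)$ with $\tilde f\circ g$. Since the paper gives no further details, there is nothing to compare beyond this.
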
 
  
  Let $E$ be a subspace of $ \dot{A}^m_p(\re^n)$ s.t. 
$ \dot{A}^m_p(\R)= E \oplus \mathcal{P}_0$, where  $\mathcal{P}_0$ denotes the set of constant functions. Consider the  associated projection mapping $R :  \dot{A}^m_p(\R) \rightarrow \mathcal{S'}(\R)$ s.t. $R(g)$ is the unique element of $E$ 
s.t. $g-R(g)\in  \mathcal{P}_0$. If $R$ is a continuous mapping from  $\dot{A}^m_p(\R)$, endowed with its semi-norm,
to $\mathcal{S'}(\R)$, endowed with the $\ast$-weak topology, $E$ is called a {\em realization} of $\dot{A}^m_p(\R)$ ; if endowed with the {\em norm}
$\|.\|_{ \dot{A}^m_p}$, $E$  turns out to be a Banach space, continuously embedded into $L_{p, loc}(\R)$.  We refer to \cite[4.1]{BM} for details. Till the end of Section \ref{proofchain}, $E$ will denote a realization of $\dot{A}^m_p(\R)$.

\begin{prop}\label{cont} Under the conditions of Theorem \ref{compU}, the mapping
\ $g \mapsto U_{\gamma,s}(f,g)$
is continuous from $E$ to $L_{pm/\ell}$.
\end{prop}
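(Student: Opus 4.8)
The goal is to show continuity of $g\mapsto U_{\gamma,s}(f,g)$ from the Banach space $E$ (a realization of $\dot A^m_p$) to $L_{pm/\ell}$. I would split this into the two cases $s<m$ and $s=m$, as in the proof of the estimate~(\ref{esticomp}), since the structure of the argument differs.

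First, the case $s<m$. Here $f^{(s)}$ is continuous and bounded, and $U_{\gamma,s}(f,g) = (f^{(s)}\circ g)\,g^{(\gamma_1)}\cdots g^{(\gamma_s)}$ with every factor under control. I would take a sequence $g_k\to g$ in $E$ and show $U_{\gamma,s}(f,g_k)\to U_{\gamma,s}(f,g)$ in $L_{pm/\ell}$. The derivative factors $g_k^{(\gamma_r)}\to g^{(\gamma_r)}$ in $\dot W^{|\gamma_r|}_{pm/|\gamma_r|}$ by the Gagliardo--Nirenberg embedding~(\ref{GN}) applied to the linear map $g\mapsto g$ (which is continuous on the difference $g_k-g$, using that $E$ is embedded in $L_{p,loc}$ to fix the constant ambiguity), hence the product $g_k^{(\gamma_1)}\cdots g_k^{(\gamma_s)}\to g^{(\gamma_1)}\cdots g^{(\gamma_s)}$ in $L_{pm/\ell}$ by H\"older, after passing to a subsequence to get a.e.\ convergence and a dominating function. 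For the composition factor, $f^{(s)}\circ g_k\to f^{(s)}\circ g$ a.e.\ (up to subsequence, using continuity of $f^{(s)}$ and $g_k\to g$ in measure locally), while $\|f^{(s)}\circ g_k\|_\infty$ stays bounded; combining with the $L_{pm/\ell}$-convergence of the product of derivatives yields the claim via dominated convergence, and the usual subsequence argument removes the need to pass to a subsequence.

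For the case $s=m$, the key point is that $U_{\gamma,m}(f,g) = V_{\gamma,m}(f,g)$ was \emph{constructed} as a limit of $U_{\gamma,m}(f_k,g)$ for smooth $f_k$, and the estimate~(\ref{opV}) shows that $f\mapsto V_{\gamma,m}(f,g)$ is bounded. The natural strategy is: for $g_j\to g$ in $E$, approximate $f$ by smooth $f_k$ in the sense of Proposition~\ref{density}; for smooth $f_k$, continuity of $g\mapsto U_{\gamma,m}(f_k,g)$ from $E$ to $L_p$ should follow by the same explicit analysis as in~\ref{casesmooth} (the integration-by-parts identity~(\ref{goodIP}) expresses $\|S_D(g)\|_p^p$ in terms of $\|Dg\|_{mp}$ and $\|D^2 g\|_{pm/2}$-type quantities that depend continuously on $g\in E$); then interchange the two limits using the uniform bound~(\ref{esticomp})/(\ref{opV}) and the $\varepsilon_k$-control from Proposition~\ref{density} to get $\|U_{\gamma,m}(f_k,g)-U_{\gamma,m}(f,g)\|_{pm/\ell}\le c\,\varepsilon_k(1+\|g\|_{\dot A^m_p})^m$ uniformly for $g$ in a bounded subset of $E$. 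A three-epsilon argument then closes the case.

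\textbf{Main obstacle.} I expect the hard part to be the continuity of $g\mapsto U_{\gamma,m}(f_k,g)$ for \emph{fixed smooth} $f_k$, i.e.\ showing $S_D(g_j)\to S_D(g)$ in $L_p$ when $g_j\to g$ in $E$. The identity~(\ref{goodIP}) is quadratic-type in the derivatives of $g$ after integration by parts, so one must show the right-hand side is continuous in $g$; this requires a.e.\ convergence of the distinguished representatives (and of $Dg_j$, $D^2g_j$ in the appropriate $L_q$-norms via Gagliardo--Nirenberg) plus a domination argument, and care is needed because the integration-by-parts formula~(\ref{goodIP}) is itself proved \emph{using} the distinguished representative, so one cannot blithely pass to limits inside it. An alternative, cleaner route would be to use bilinearity-type estimates: write $S_D(g_j)-S_D(g)$ and split into a term where the difference falls on $f_k^{(m)}\circ(\cdot)$ (controlled by uniform continuity of $f_k^{(m)}$ on the range, which is bounded since $g_j$ is bounded in $C^0$ via Proposition~\ref{embed}) and terms where it falls on one of the derivative factors (controlled by H\"older and Gagliardo--Nirenberg exactly as in~\ref{s<m}), thereby reducing the $s=m$ continuity to the same mechanism as $s<m$ without re-deriving~(\ref{goodIP}). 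I would pursue this second route.
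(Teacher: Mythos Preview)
Your split into the cases $s<m$ and $s=m$ matches the paper, and your treatment of $s<m$ is essentially what the paper does (it invokes \cite[prop.~9]{BM} for precisely the mechanism you describe: a bounded continuous outer function composed with $g$, multiplied by a product of derivatives that converges in $L_{pm/\ell}$ via Gagliardo--Nirenberg and H\"older).

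The gap is in your $s=m$ argument. You write that the difference term $(f_k^{(m)}\circ g_j - f_k^{(m)}\circ g)$ is ``controlled by uniform continuity of $f_k^{(m)}$ on the range, which is bounded since $g_j$ is bounded in $C^0$ via Proposition~\ref{embed}''. But Proposition~\ref{embed} is an embedding for $W^\ell_{E_p}(\re)$, the space containing $f'$; it says nothing about $g\in\dot A^m_p(\R)$. In fact, under condition~(\ref{crit}) (i.e.\ $m\neq n$ or $p>1$), the space $\dot A^m_p(\R)$ is \emph{not} embedded into $L_\infty(\R)$, so the range of $g$ (and of the $g_j$) is generally unbounded. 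On an unbounded range you have neither boundedness nor uniform continuity of $f_k^{(m)}$, even for smooth $f_k$ (recall $f_k^{(m)}\in E_p$, not $L_\infty$). So your ``second route'' breaks down exactly at the point you flagged as the main obstacle.

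The paper handles $s=m$ by citing \cite[5.1.2]{BM}, and the reason the argument there succeeds is hidden in identity~(\ref{goodIP}): integration by parts trades the factor $f^{(m)}\circ g$, which is \emph{not} bounded, for the factors $F_j\circ g$, where the $F_j$ \emph{are} bounded continuous with $\|F_j\|_\infty\le c\,\|f'\|_{W^{m-1}_{E_p}}^p$. This reduces the $s=m$ continuity to the same bounded-outer-function mechanism as $s<m$ (now applied to $\int (F_j\circ g)\,T_j(g)\,\mathrm{d}x$, with $T_j(g)$ continuous in $L_1$ by H\"older and~(\ref{GN})), together with almost-everywhere convergence of $S_D(g_j)$ along a subsequence. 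Your ``first route'' is therefore the right one; what you were missing is that the whole point of~(\ref{goodIP}) is precisely to manufacture the boundedness that $f^{(m)}$ itself lacks.
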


\begin{proof} {\em Step 1 : case $s<m$.}  As observed in Section \ref{s<m}, the function $f^{(s)}$ is bounded and continuous, and the mapping $g \mapsto  g^{(\gamma_1)}\cdots g^{(\gamma_s)}$ is continuous from $E$ to $L_{pm/\ell}$.
Then the wished property follows by \cite[prop.~9]{BM}.

{\em Step 2 : case $s=m$.} The continuity of $g \mapsto U_{\gamma,m}(f,g)$ is proved in \cite[5.1.2]{BM}. \end{proof}

\subsubsection{The case of a smooth function $f$}\label{chainsmooth}

Let $g\in E$. Let us consider a sequence $(g_k)$ a smooth functions s.t. $g_k\rightarrow g$ in $E$. Then
 \[
 (f\circ g_k)^{(\alpha)}=\sum c_{\alpha,s, \gamma}U_{\gamma,s}(f,g_k) \]
 holds true in the usual sense. A standard argument shows that  
 \[(f\circ g_k)^{(\alpha)} \rightarrow (f\circ g)^{(\alpha)}\]
 in the sense of distributions, see e.g. \cite[step 2, p.~6111]{Bou_09a}. By Proposition \ref{cont},
 $U_{\gamma,s}(f,g_k)$ tends to $U_{\gamma,s}(f,g)$ in $L_{pm/|\alpha|}$, hence in the sense of distributions. 
 Then the chain rule holds true in the sense of distributions.  By Theorem \ref{comp}, and embedding (\ref{GN}), we know that  $(f\circ g)^{(\alpha)}$ belongs to $L_{pm/|\alpha|}(\R)$. We conclude that the chain rule holds true a.e..

  \subsubsection{The general case} \label{chain}

  Let $g\in E$. We use again a sequence $(f_k)$ given by Proposition \ref{density}. By Section \ref{chainsmooth}, we have
  \[
 (f_k\circ g)^{(\alpha)}=\sum c_{\alpha,s, \gamma}U_{\gamma,s}(f_k,g)\,, \]
 a.e., for any $k$.  By the estimate (\ref{esticomp}), $U_{\gamma,s}(f_k,g)$ tends to $U_{\gamma,s}(f,g)$ in $L_{pm/|\alpha|}$, hence in the sense of distributions.  By Proposition \ref{density}, it follows easily that
  $f_k\circ g \rightarrow f\circ g$ in $L_{1,loc}(\R)$. Hence
 \[(f_k\circ g)^{(\alpha)} \rightarrow (f\circ g)^{(\alpha)}\]
 in the sense of distributions.  We conclude the proof so as in Section \ref{chainsmooth}.
 
 \section{Related results}
 
 Now we state two corollaries of Theorem \ref{compU}.
 
\begin{thm}\label{chainbis}  Let $m$ be a natural number $\geq 2$. Let $f:\re \rightarrow \re$ be a function s.t.  $N_f$ sends $\dot{A}^m_p(\re^n)$ to itself. Let $\alpha\in \N^n$ be s.t. $0<|\alpha|\leq m$.  Then the formula (\ref{faa}) holds true in the sense of distributions and almost everywhere, for all $g\in \dot{A}^m_p(\re^n)$.
 \end{thm}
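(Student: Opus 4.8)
The plan is to split the argument according to whether $\dot A^m_p(\re^n)$ is embedded into $L_\infty$, i.e.\ according to criterion (\ref{crit}). \emph{Case 1: $m\neq n$ or $p>1$.} Then $\dot A^m_p(\re^n)$ is not embedded into $L_\infty(\R)$, so the second part of Theorem \ref{comp} applies: the hypothesis that $N_f$ maps $\dot A^m_p(\re^n)$ into itself forces $f'\in W^{m-1}_{E_p}$. Theorem \ref{compU} then applies verbatim and already yields, for every $g\in\dot A^m_p(\re^n)$ and every $0<|\alpha|\le m$, both the well-definiteness of the terms $U_{\gamma,s}(f,g)$ and the validity of (\ref{faa}) in the sense of distributions and almost everywhere.

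\emph{Case 2: $m=n$ and $p=1$.} Here $\dot A^n_1(\re^n)\hookrightarrow L_\infty(\R)$, and $f$ is continuous by \cite[thm.~4]{Bou_09a}. Fix $g\in\dot A^n_1(\re^n)$ and a bounded continuous representative $v$ of $g$, and set $R:=\|v\|_\infty$. The crucial step is a localization lemma: \emph{under the hypothesis on $f$, one has $f\in W^n_{1,\mathrm{loc}}(\re)$.} To prove it, choose $\chi\in C^\infty_c(\re^n)$ equal to $1$ on a neighbourhood of $\overline{Q}$, where $Q:=(-1/2,1/2)^n$, and, for $\rho>0$, put $g_0:=\rho\, x_1\chi(x)\in C^\infty_c(\re^n)\subset\dot A^n_1(\re^n)$, so that $g_0(x)=\rho x_1$ on $Q$. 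By hypothesis $f\circ g_0\in\dot A^n_1(\re^n)\subseteq\dot W^n_1(\re^n)$; restricting $\partial_1^n(f\circ g_0)$ to $Q$, where it equals $\rho^n f^{(n)}(\rho x_1)$, and slicing by Fubini's theorem, one reads off that $f^{(n)}$ is an $L_1$ function on $(-\rho/2,\rho/2)$, hence $f\in W^n_1(-\rho/2,\rho/2)$. Since $\rho$ is arbitrary, this proves the lemma; in particular $f\in C^{n-1}(\re)$.

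Granting the lemma, pick $\rho>2R+2$ and a cut-off $\theta\in C^\infty_c(\re)$ with $\theta\equiv 1$ on a neighbourhood $V$ of $[-R,R]$ and $\supp\theta\subset(-\rho/2,\rho/2)$; set $f_R:=\theta f$. Then $f_R\in W^n_1(\re)$ has compact support, so $f_R'\in W^{n-1}_{E_1}$ trivially, and $f_R$ coincides with $f$ together with all its derivatives on $V$. Since $v$ takes its values in $V$ almost everywhere, $f\circ g=f_R\circ g$ in $\dist$ and, at the level of classes in $\mathbb M$, $U_{\gamma,s}(f,g)=U_{\gamma,s}(f_R,g)$ for all admissible $s,\gamma$ — in particular $U_{\gamma,m}(f,g)$ is well defined because $U_{\gamma,m}(f_R,g)$ is, by Theorem \ref{compU}. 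Applying Theorem \ref{compU} to $f_R$ and transporting the resulting identity back to $f$ completes the proof.

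The main obstacle is the localization lemma of Case 2: one must manufacture an element of $\dot A^n_1(\re^n)$ that realizes an arbitrarily long interval as the range of a single coordinate, and then legitimately slice its top-order derivative. Once $f\in W^n_{1,\mathrm{loc}}(\re)$ is known, the truncation and the appeal to Theorem \ref{compU} are routine; the only delicate point is the identity $U_{\gamma,m}(f,g)=U_{\gamma,m}(f_R,g)$ in $\mathbb M$, which rests on $v$ taking values where $f$, $f_R$ and their $m$-th distributional derivatives agree.
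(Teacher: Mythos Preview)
Your proof is correct and follows the same route as the paper: split according to criterion (\ref{crit}), invoke Theorem \ref{comp} together with Theorem \ref{compU} in Case 1, and in Case 2 exploit $\dot A^n_1(\re^n)\hookrightarrow L_\infty(\R)$ to truncate $f$ by a smooth cutoff equal to $1$ on the (bounded) range of $g$, then apply Theorem \ref{compU} to the truncated function. The only difference is cosmetic: the paper quotes \cite[thm.~3]{BM} for $f\in W^n_{1,\mathrm{loc}}(\re)$, while you supply a self-contained argument via the test function $\rho\,x_1\chi(x)$, and you spell out the identification $U_{\gamma,m}(f,g)=U_{\gamma,m}(f_R,g)$ a bit more carefully.
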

 
\begin{proof}If the condition (\ref{crit}) holds, Theorem \ref{comp} says us that $f'\in W^{m-1}_{E_p}$ ; then the chain rule follows by Theorem \ref{compU}.
 
 Assume now that $N_f$ sends $\dot{A}^n_1(\re^n)$ to itself. We know that $\dot{A}^n_1(\re^n)$ is a subspace of $C_b(\R)$,  see \cite[prop.~3]{BM}. According to \cite[thm.~3]{BM}, it holds
$ f\in W^n_1(\re)_{loc}$. Let $g\in \dot{A}^n_1(\re^n)$. We may consider a smooth compactly supported function  $\psi$ s.t. $\psi(x)=1$ on the range of $g$. Then $f\circ g= (f\psi)\circ g$ and $(f\psi)'\in W^{n-1}_{E_1}$. By applying Theorem \ref{compU} to the function
$f\psi$, we obtain the formula (\ref{faa}).\end{proof}

 Theorem \ref{chainbis} has a counterpart for usual inhomogeneous Sobolev spaces. Here we consider the inhomogeneous
 Adams-Frazier space $A^m_p(\R):= (W^m_p\cap \dot{W}^1_{mp})(\R)$, endowed with its natural norm.

\begin{thm}\label{chainter}  Let $m$ be a natural number $\geq 2$. Let $f:\re \rightarrow \re$ be a function s.t.  $N_f$ sends $A^m_p(\re^n)$ (resp. $W^m_p(\R)$) to itself. Let $\alpha\in \N^n$ be s.t. $0<|\alpha|\leq m$.  Then the formula (\ref{faa}) holds true in the sense of distributions and almost everywhere, for all $g \in A^m_p(\re^n)$ (resp. $W^m_p(\R)$).\end{thm}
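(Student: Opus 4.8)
The plan is to reduce both assertions to Theorem~\ref{compU}. The two elementary facts I would use are: first, $A^m_p(\R)=(W^m_p\cap\dot W^1_{mp})(\R)\subseteq(\dot W^m_p\cap\dot W^1_{mp})(\R)=\dot A^m_p(\R)$, so every $g\in A^m_p(\R)$ is already admissible in Theorem~\ref{compU}; second, $A^m_p(\R)=W^m_p(\R)$ whenever $W^m_p(\R)\hookrightarrow L_\infty(\R)$, because in that range $mp\ge n$ and a Gagliardo--Nirenberg estimate of the form $\|\nabla g\|_{mp}\le c\,\|g\|_{W^m_p}$ (equivalently $W^{m-1}_p(\R)\hookrightarrow L_{mp}(\R)$ for $mp\ge n$) gives $W^m_p(\R)\hookrightarrow\dot W^1_{mp}(\R)$. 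Hence in the ``$L_\infty$ range'' the two statements of the theorem coincide, and outside it $A^m_p(\R)$ (hence $\dot A^m_p(\R)$) fails to be embedded into $L_\infty(\R)$, i.e.\ condition (\ref{crit}) holds.

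First I would treat the case $W^m_p(\R)\hookrightarrow L_\infty(\R)$, arguing exactly as in the second half of the proof of Theorem~\ref{chainbis}. Here $A^m_p=W^m_p$, every $g$ in it is bounded and continuous, and the standard local-symbol result for composition operators on $W^m_p$ (see \cite{RS}, and its $A$-analogue used in \cite{BM}) gives $f\in W^m_{p,\mathrm{loc}}(\re)$. Given $g$, I would choose $\psi\in C^\infty_c(\re)$ with $\psi\equiv1$ on a neighbourhood of the (relatively compact) range of $g$, so that $(f\psi)'\in W^{m-1}_p(\re)\hookrightarrow W^{m-1}_{E_p}$; Theorem~\ref{compU} applied to $f\psi$ and $g$ then yields (\ref{faa}) for $(f\psi)\circ g$, and since $f\circ g=(f\psi)\circ g$ and $(f\psi)^{(s)}=f^{(s)}$ on the range of $g$ one has $U_{\gamma,s}(f\psi,g)=U_{\gamma,s}(f,g)$, so (\ref{faa}) holds for $f\circ g$ as well.

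Next the case $W^m_p(\R)\not\hookrightarrow L_\infty(\R)$. For the assertion about $A^m_p(\R)$ I would invoke the characterization of composition operators on $A^m_p(\R)$ from \cite{Bou_09a,BM}: since $\dot A^m_p(\R)\not\hookrightarrow L_\infty(\R)$ there is no degeneracy, and $N_f(A^m_p)\subseteq A^m_p$ forces $f'\in W^{m-1}_{E_p}$; as $g\in A^m_p(\R)\subseteq\dot A^m_p(\R)$, Theorem~\ref{compU} applies verbatim. For the assertion about $W^m_p(\R)$ the situation is genuinely different, and this is the step I expect to be the main obstacle: when $mp<n$ the space $W^m_p(\R)$ is strictly larger than $\dot A^m_p(\R)$, a general $g\in W^m_p(\R)$ need not lie in $\dot W^1_{mp}(\R)$, and one cannot cut $g$ off by a compactly supported factor without risking to leave $\dot W^1_{mp}(\R)$, so Theorem~\ref{compU} is simply not available. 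The way out is rigidity rather than an estimate: under the hypothesis $W^m_p(\R)\not\hookrightarrow L_\infty(\R)$ the Dahlberg degeneracy \cite{Da} forces any $f$ with $N_f(W^m_p)\subseteq W^m_p$ to be linear, $f(t)=ct$ (the additive constant vanishes because the constant function $N_f(0)$ must lie in $L_p(\R)$), whence (\ref{faa}) reduces to the trivial identity $(f\circ g)^{(\alpha)}=c\,g^{(\alpha)}=(f'\circ g)\,g^{(\alpha)}$, valid both as distributions and almost everywhere. The remaining points are routine bookkeeping: the Gagliardo--Nirenberg inclusion $W^m_p\hookrightarrow\dot W^1_{mp}$ for $mp\ge n$, the fact that the range cutoff alters neither $f\circ g$ nor the terms $U_{\gamma,s}(f,g)$, and the matching of the non-embedding ranges for $W^m_p$, $A^m_p$ and $\dot A^m_p$.
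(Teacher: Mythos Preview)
Your approach matches the paper's in broad outline---split on whether $W^m_p(\R)\hookrightarrow L_\infty(\R)$, use a cutoff $\psi$ in the bounded-range case, and invoke Dahlberg in the subcritical case---but there is a genuine gap in your treatment of $W^m_p(\R)$ when the $L_\infty$ embedding fails. You assert that in this range Dahlberg's degeneracy forces $f(t)=ct$. This is not true in general: Dahlberg's theorem requires the \emph{strict} inequality $1+\frac{1}{p}<m<\frac{n}{p}$, and the endpoint $m=1+\frac{1}{p}$ is excluded. Concretely, take $m=2$, $p=1$, $n\ge 3$. Then $mp=2<n$, so $W^2_1(\R)\not\hookrightarrow L_\infty(\R)$, yet Dahlberg's hypothesis $2<2$ fails. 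In this regime the characterization from \cite{B} says that $N_f$ acts on $W^2_1(\R)$ iff $f''\in L_1(\re)$, so there are many \emph{nonlinear} admissible $f$, and your ``rigidity'' argument collapses. Moreover such a $g\in W^2_1(\R)$ need not lie in $\dot W^1_{2}(\R)$, so Theorem~\ref{compU} is unavailable exactly as you feared; but you have no alternative route for this case.

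The paper closes this gap by a direct ad hoc estimate (its Substep~2.4): for smooth $f$ one has
\[
\|(f''\circ g)\,(\partial_j g)(\partial_k g)\|_1 \le \|f''\|_{L_1(\re)}\,\|g\|_{W^2_1}\,,
\]
obtained by the same integration-by-parts device as in Section~\ref{s=m}, after which the density and Vall\'ee-Poussin arguments of Sections~\ref{s=m}--\ref{proofchain} go through with $\|f''\|_{L_1}$ replacing $\|f'\|_{W^{m-1}_{E_p}}$. A second, lesser omission in your case split is the critical line $mp=n$, $p>1$: there $W^m_p\not\hookrightarrow L_\infty$ but Dahlberg's strict inequality $m<n/p$ also fails; however this is easily repaired since $W^m_p=A^m_p$ whenever $mp\ge n$, so your $A^m_p$ argument (via $f'\in W^{m-1}_{E_p}$) covers it. The exceptional case $m=2$, $p=1$, $n\ge 3$ is the one that genuinely requires new work.
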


\begin{proof} {\em Step 1 : the case of $A^m_p(\re^n)$. }
 
 {\em Substep 1.1.} If
 $m>n/p$, or $m=n$ and $ p=1$, then
 $A^m_p(\re^n)=W^m_p(\R)\hookrightarrow C_0(\R)$.
 According to \cite[thm.~3]{BM}, it holds $ f\in W^m_p(\re)_{loc}$. Then we conclude the proof, exactly as that of Theorem \ref{chainbis}.
 
 {\em Substep 1.2.} If
 $m\leq n/p$, and $m\not= n$ or $ p>1$, then we have $f'\in W^{m-1}_{E_p}$,
 see \cite[thm.~2]{BM}, and we apply Theorem \ref{compU}.\\
 
 {\em Step 2 : the case of $W^m_p(\re^n)$. }
 
  {\em Substep 2.1.} If $m\geq n/p$, then $W^m_p(\re^n)= A^m_p(\re^n)$, and we are reduced to Step 1. 
  
  {\em Substep 2.2.} If
 \[ 1 + \frac{1}{p} < m< \frac{n}{p}\,,\]
 it holds $f(t)= ct$ for some constant $c$, see \cite{Da}, then the chain rule holds trivially.
 
 {\em Substep 2.3.} If $n=1,2$, then $W^2_1(\R)= A^2_1(\R)$ and we are reduced to Step 1.  

 {\em Substep 2.4.} Let us turn to the case $m=2, p=1, n\geq 3$.  According to \cite[thm.~3]{B}, it holds $f''\in L_1(\re)$.
 Arguing so as in Section \ref{s=m}, we have 
\be\label{casL1} \| (f''\circ g)\, (\partial_jg) (\partial_kg)\|_1 \leq  \|f''\|_1 \| g\|_{W^2_1}\ee
 for all $g\in W^2_1(\R)$, $j,k = 1,\ldots, n$, and for any smooth function $f$.
 Then, arguing so as in Section \ref{proofchain}, we obtain the chain rule of order 2 for functions
 $g\in W^2_1(\R)$ and for any $f$ s.t. $f''\in L_1(\re)$. The details are left to the reader.\end{proof}

{

		G\'erard Bourdaud
	
	Universit\'e Paris Cit\'e and Sorbonne Universit\'e, 
	
	CNRS, IMJ-PRG, 
	
	F-75013 Paris, France
	
		bourdaud@math.univ-paris-diderot.fr  \\

 \end{document}